\newtheorem{theorem}{Theorem}
\theoremstyle{plain}
\newtheorem{proposition}{Proposition}
\newtheorem{remark}{Remark}
\numberwithin{equation}{section}
\subjclass{35J75, 35J62, 35J92.}
\keywords{Singularity; perturbation; nodal solutions; sub-supersolutions; topological
degree.}
\begin{document}
\title[Singular elliptic problems]{Nodal and constant sign solutions for
singular elliptic problems}

\begin{abstract}
We establish the existence of multiple solutions for singular quasilinear
elliptic problems with a precise sign information: two opposite constant
sign solutions and a nodal solution. The approach combines
sub-supersolutions method and Leray-Schauder topological degree involving
perturbation argument.
\end{abstract}

\author[D. Motreanu]{Dumitru Motreanu}
\address{Universit\'e de Perpignan, D\'epartement de Math\'ematiques, 66860
Perpignan, France}
\email{motreanu@univ-perp.fr}
\author[A. Moussaoui]{Abdelkrim Moussaoui}
\address{Applied Mathematics Laboratory, Faculty of Exact Sciences \\
and Biology Department, Faculty of Natural and Life Sciences\\
A. Mira Bejaia University, Targa Ouzemour, 06000 Bejaia, Algeria}
\email{abdelkrim.moussaoui@univ-bejaia.dz}
\maketitle

\section{Introduction}

Let $\Omega \subset \mathbb{R}^{N}$ $\left( N\geq 2\right) $ be a bounded
domain with $C^{1}$ boundary $\partial \Omega $. We consider the singular
elliptic problem 
\begin{equation*}
(\mathrm{P})\qquad \left\{ 
\begin{array}{ll}
-\Delta _{p}u=f(x,u) & \text{in}\;\Omega , \\ 
u=0 & \text{on}\;\partial \Omega ,%
\end{array}%
\right.
\end{equation*}%
where $\Delta _{p}$ stands for the $p$-Laplacian operator on $%
W_{0}^{1,p}(\Omega )$ with $1<p<\infty $ and the reaction term $f(x,u$) is
described by a Carath\'{e}odory function $f:\Omega \times (\mathbb{R}%
\backslash \{0\})\rightarrow \mathbb{R}$ that can exhibit singularity when $%
u $ approaches zero. This occurs under the following hypothesis on the
nonlinearity $f(x,s)$ reflecting the singular character of problem $(\mathrm{%
P})$:

\begin{equation*}
\lim_{s\rightarrow 0}f(x,s)=+\infty \text{ \ uniformly for a.e. }x\in \Omega
.
\end{equation*}%
For more information on the study of singular problems we refer to \cite{DM,
DM2, H, KM, LM, MMP, MMM, MM1, MM2, MM3, M} and the references therein.

\bigskip

The presence of a singularity in $(\mathrm{P})$ represents a major obstacle
to overcome, especially since nonpositive solutions are addressed. Namely,
beside positive and negative solutions, we also discuss the existence of
nodal solutions for $(\mathrm{P})$ that are sign changing functions and
inevitably go through the singularity. As far as we know, such solutions for
problems involving singularities have been rarely investigated in the
literature. Actually, \cite{M} is the only paper that has considered this
issue. The existence of nodal solutions is established for a class of
semilinear singular system by means of the trapping region formed by pairs
of appropriately defined sign changing sub-supersolutions. Exploiting
spectral properties of Laplacian operator as well as adequate truncation,
the author establishes that the nodal solution vanishes on negligible sets.
This is an essential point enabling nodal solutions investigation for
singular problems.

In line with \cite{M}, by a solution of problem $(\mathrm{P})$ we mean any $%
u\in W_{0}^{1,p}\left( \Omega \right) $ such that $f(x,u)\varphi \in
L^{1}(\Omega )$ and 
\begin{equation*}
\int_{\Omega }\left\vert \nabla u\right\vert ^{p-2}\nabla u\nabla \varphi \
dx=\int_{\Omega }f(x,u)\varphi \ dx,
\end{equation*}%
for all $\varphi \in W_{0}^{1,p}(\Omega )$.

Our main goal is to establish multiplicity result for singular quasilinear
elliptic problem $(\mathrm{P})$, with a precise sign information. We provide
three solutions for $(\mathrm{P})$, two of which are of opposite
constant-sign solutions while the third one is nodal. Our approach combines
sub-supersolution method and topological degree theory. We first establish
the existence of opposite constant-sign solutions $u_{+},u_{-}\in \mathcal{C}%
^{1,\tau }(\overline{\Omega })$. They are located in positive and negative
intervals formed by two opposite constant sign sub-supersolutions pairs,
constructed through a choice of suitable functions with an adjustment of
adequate constants. Then, focusing on the aforementioned intervals, we show
the existence of a minimal positive solution $u_{+}^{\ast }$ and a maximal
negative solution $u_{-}^{\ast }$ for problem $(\mathrm{P})$. The argument
is essentially based on the Zorn's Lemma as well as the $S_{+}$-property of
the negative $p$-Laplacian operator on $W_{0}^{1,p}(\Omega )$.

A significant feature in the present work concerns the finding of a nodal
solution for singular problem $(\mathrm{P})$. It is a third solution for $(%
\mathrm{P})$ achieved via the Leray-Schauder degree theory. The latter is
applied to a perturbed system $(\mathrm{P}_{\varepsilon }),$ depending on a
parameter $\varepsilon >0,$ whose study is relevant for problem $(\mathrm{P})
$. Precisely, we prove that the degree on a ball $\mathcal{B}%
_{R_{\varepsilon }}(0)$, centered at $0$ of radius $R_{\varepsilon }$,
encompassing all potential solutions of $(\mathrm{P}_{\varepsilon })$ is $0,$
while the degree on $\mathcal{B}_{R_{\varepsilon }}(0)\backslash \mathcal{A}%
_{R_{\varepsilon }},$ where $\mathcal{A}_{R_{\varepsilon }}$ is a set
containing only solutions lying inside the interval $]u_{-}^{\ast
},u_{+}^{\ast }[,$ is equal to $1$. By the excision property of
Leray-Schauder degree, this leads to the existence of a nontrivial solution $%
u_{\varepsilon }^{\ast }$ for $(\mathrm{P}_{\varepsilon })$\ in $\mathcal{A}%
_{R_{\varepsilon }}.$ Then, a solution $u^{\ast }\in ]u_{-}^{\ast
},u_{+}^{\ast }[$ of $(\mathrm{P})$ is derived by passing to the limit as $%
\varepsilon \rightarrow 0.$ The argument is based on a priori estimates,
dominated convergence theorem as well as $S_{+}$-property of the negative $p$%
-Laplacian. Due to the extremality of constant-sign solutions $u_{-}^{\ast }$
and $u_{+}^{\ast },$ then $u^{\ast }$ is a third solution of $(\mathrm{P})$
that changes sign.

The rest of this article is organized as follows. Section \ref{S2} deals
with opposite constant sign solutions while Section \ref{S3} provides a
nodal solution for problem $(\mathrm{P})$.

\section{Constant sign solutions}

\label{S2}

Given $1<p<\infty$, the spaces $L^{p}(\Omega )$ and $W_{0}^{1,p}(\Omega)$
are endowed with the usual norms $\Vert u\Vert_{p}=(\int_{\Omega }|u|^{p}\
dx)^{1/p}$ and $\Vert u\Vert _{1,p}=(\int_{\Omega }|\nabla u|^{p}\ dx)^{1/p}$%
, respectively. For a later use, set $p^{\prime }=\frac{p}{p-1}$. We will
also use the spaces $C(\overline{\Omega })$ and $C_{0}^{1,\beta }(\overline{%
\Omega })=\{u\in C^{1,\beta }(\overline{\Omega } ):u=0\ 
\mbox{on
$\partial\Omega$}\}$ with $\beta \in (0,1)$.

Let us denote by $\lambda _{1,p}$ the first eigenvalue of $-\Delta _{p}$ on $%
W_{0}^{1,p}(\Omega )$ which is given by 
\begin{equation}
\lambda _{1,p}=\inf_{u\in W_{0}^{1,p}(\Omega )\setminus \{0\}}\frac{%
\int_{\Omega }|\nabla u|^{p}\,\mathrm{d}x}{\int_{\Omega }|u|^{p}\,\mathrm{d}x%
}.  \label{71}
\end{equation}
Let $\phi_{1,p}$ be the positive normalized eigenfunction of $-\Delta _{p}$
corresponding to $\lambda _{1,p}$, that is, 
\begin{equation*}
-\Delta _{p}\phi _{1,p}=\lambda _{1,p}\phi _{1,p}^{p-1}\ \text{in }\Omega ,\
\ \phi _{1,p}=0\ \text{on }\partial \Omega,
\end{equation*}
$\phi _{1,p}>0$ in $\Omega $, and $\Vert \phi _{1,p}\Vert _{p}=1$. Recall
that there exists a constant $c_{0}>0$ such that 
\begin{equation}
\phi _{1,p}(x)\geq c_{0}d(x)\text{ for all }x\in \Omega ,  \label{14}
\end{equation}
where $d(x)$ denotes the distance from $x\in \overline{\Omega }$ to the
boundary $\partial \Omega$.

For $u,v\in C^{1}(\overline{\Omega })$, the notation $u\ll v$ means 
\begin{equation*}
\begin{array}{c}
u(x)<v(x),\,\,\forall x\in \Omega,\,\,\mbox{and}\,\,\,\frac{\partial v}{%
\partial \eta }<\frac{\partial u}{\partial \eta }\,\,\,\mbox{on}%
\,\,\,\partial \Omega,%
\end{array}%
\end{equation*}
where $\eta $ is the outward unit normal to $\partial \Omega$.

\begin{remark}
\label{R1} Under assumption $(\mathrm{H.1})$, any solution $u\in
W_{0}^{1,p}(\Omega )$ of $(\mathrm{P})$ satisfies $u(x)\neq 0$ for a.e. $%
x\in \Omega $. Otherwise, if $u$ vanishes on a set of positive measure, then 
$(\mathrm{P})$ and hypothesis $(\mathrm{H}.1)$ are contradictory.
\end{remark}

\subsection{Opposite constant sign solutions}

For a fixed $\delta >0$, set $\Omega _{\delta }:=\left\{ x\in \Omega
:d(x)<\delta \right\} $. Let $y,y_{\delta }\in \mathcal{C}^{1}\left( 
\overline{\Omega }\right) $ be the unique solutions of the Dirichlet
problems 
\begin{equation}
-\Delta _{p}y=d(x)^{\alpha }\text{ in }\Omega ,\text{ \ }y=0\text{ on }%
\partial \Omega ,  \label{20}
\end{equation}%
\begin{equation}
-\Delta _{p}y_{\delta }=\left\{ 
\begin{array}{ll}
d(x)^{\alpha } & \text{in \ }\Omega \backslash \overline{\Omega }_{\delta },
\\ 
-1 & \text{in \ }\Omega _{\delta },%
\end{array}%
\right. ,\text{ \ }y_{\delta }=0\text{ on }\partial \Omega ,  \label{20*}
\end{equation}
where $\alpha \in (-1,0)$. They satisfy 
\begin{equation}
c^{-1}d(x)\leq y_{\delta }(x)\leq y(x)\leq cd(x)\text{ in }\Omega,
\label{21}
\end{equation}
with a constant $c>1$ (see \cite[Lemma 3.1]{DM}).

For a constant $C>1$, we also pose 
\begin{equation}  \label{24}
\overline{u}=Cy\text{ \ and \ }\underline{u}=C^{-1}y_{\delta}.
\end{equation}
Due to \eqref{21}, this gives 
\begin{equation}  \label{18}
\overline{u}\geq \underline{u}\ \text{ in }\ \overline{\Omega },
\end{equation}
so we can consider the ordered interval 
\begin{equation*}
\left[ \underline{u},\overline{u} \right]=\{v\in W_{0}^{1,p}(\Omega):%
\underline{u}(x)\leq v(x)\leq \overline{u}(x)\ \text{a.e.}\ x\in\Omega \}.
\end{equation*}

We further assume.

\begin{description}
\item[$(\mathrm{H}.2)$] \textit{There exist constants }$M>0$\textit{\ and }$%
-1<\alpha <0$\textit{\ such that} 
\begin{equation*}
f(x,s)\leq M(1+s^{\alpha })\text{ \ and \ }f(x,-s)\geq -M(1+|s|^{\alpha }),
\end{equation*}
\ for a.e\textit{. }$x\in \Omega ,$ all $s>0.$

\item[$(\mathrm{H}.3)$] \textit{There exist constants }$\beta <0<m$\textit{\
such that} 
\begin{equation*}
\alpha \geq \beta >-\min \{1,p-1\},
\end{equation*}
\begin{equation*}
f(x,s)\geq ms^{\beta }\text{ \ and \ }f(x,-s)\leq -m|s|^{\beta },
\end{equation*}
for a.e\textit{. }$x\in \Omega ,$ all $s>0.$
\end{description}

\begin{theorem}
\label{T1} Assume that $(\mathrm{H.2})$ and $(\mathrm{H.3})$ hold. Then
problem $(\mathrm{P})$ admits a positive solution $u_{+}\in \mathcal{C}%
^{1,\tau }(\overline{\Omega })$ and a negative solution $u_{-}\in \mathcal{C}%
^{1,\tau }(\overline{\Omega })$ within the ordered intervals $\left[ 
\underline{u},\overline{u}\right] $ and $\left[ -\overline{u},-\underline{u}%
\right] ,$ respectively. Moreover, every positive solution $u_{+}$ and
negative solution $u_{-}$ of $(\mathrm{P})$ within $\left[ 0,\overline{u}%
\right] \cap \mathcal{C}^{1}(\overline{\Omega })$ and $\left[ -\overline{u},0%
\right] \cap \mathcal{C}^{1}(\overline{\Omega })$, respectively, satisfy 
\begin{equation}
\underline{u}(x)\leq u_{+}(x)\text{ \ and \ }u_{-}(x)\leq -\underline{u}(x),%
\text{ }\forall x\in \Omega .  \label{p1}
\end{equation}
\end{theorem}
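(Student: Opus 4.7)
The plan is to produce $u_+$ by the sub-supersolution method adapted to singular quasilinear problems (in the spirit of \cite{DM}), using the ordered pair $(\underline{u},\overline{u})$ supplied by \eqref{24}; the negative solution $u_-$ is then obtained symmetrically from the pair $(-\overline{u},-\underline{u})$. The extremality bounds \eqref{p1} are derived separately through a weak comparison against the auxiliary sublinear singular equation $-\Delta_p u = m u^{\beta}$, whose right-hand side is decreasing in $u$.

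My first task is to show that, for $C>1$ large enough, $(\underline{u},\overline{u})$ is a genuine sub-supersolution pair. Combining \eqref{20}, \eqref{24} and $(\mathrm{H}.2)$, one has $-\Delta_p\overline{u}=C^{p-1}d^{\alpha}$, while $f(x,v)\leq M+Mc^{-\alpha}C^{-\alpha}d^{\alpha}$ uniformly in $v\in[\underline{u},\overline{u}]$ because $v\geq\underline{u}\geq c^{-1}C^{-1}d$ and $\alpha<0$. Hence the supersolution inequality reduces to $C^{p-1}\geq M d^{-\alpha}+Mc^{-\alpha}C^{-\alpha}$, which holds for $C$ large because $d^{-\alpha}\in L^{\infty}(\Omega)$ and $-\alpha<p-1$, a direct consequence of $\beta>-\min\{1,p-1\}$ in $(\mathrm{H}.3)$. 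An analogous computation based on \eqref{20*} and $(\mathrm{H}.3)$ handles $\underline{u}$: the inequality is trivial in $\Omega_{\delta}$, while in $\Omega\setminus\overline{\Omega}_{\delta}$ it reduces to $C^{\beta-(p-1)}d^{\alpha-\beta}\leq mc^{\beta}$, again achieved for $C$ large since $\beta-(p-1)<0$ and $d^{\alpha-\beta}$ is bounded. With the pair secured, the singular sub-supersolution scheme produces a solution $u_+\in W_0^{1,p}(\Omega)\cap[\underline{u},\overline{u}]$ of $(\mathrm{P})$; standard regularity theory for singular quasilinear problems, applied to the upper bound $f(x,u_+)\leq M(1+\underline{u}^{\alpha})$, then upgrades $u_+$ to $\mathcal{C}^{1,\tau}(\overline{\Omega})$.

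For the extremality claim \eqref{p1}, let $u_+\in[0,\overline{u}]\cap\mathcal{C}^{1}(\overline{\Omega})$ be any positive solution. By $(\mathrm{H}.3)$, $-\Delta_p u_+=f(x,u_+)\geq m u_+^{\beta}$, so $u_+$ is a supersolution of $-\Delta_p u=mu^{\beta}$, while the calculation above simultaneously exhibits $\underline{u}$ as a subsolution of the same equation. Testing both weak formulations with $\varphi=(\underline{u}-u_+)_+\in W_0^{1,p}(\Omega)$---which is admissible because $0\leq mu_+^{\beta}\varphi\leq f(x,u_+)\varphi\in L^{1}(\Omega)$ by the notion of solution, while $\underline{u}^{\beta}\varphi\leq\underline{u}^{\beta+1}$ is bounded on $\Omega$ thanks to $\beta+1>0$---and subtracting yields
\begin{equation*}
\int_{\{\underline{u}>u_+\}}\bigl(|\nabla\underline{u}|^{p-2}\nabla\underline{u}-|\nabla u_+|^{p-2}\nabla u_+\bigr)\cdot\nabla(\underline{u}-u_+)\,dx\leq m\int_{\Omega}\bigl(\underline{u}^{\beta}-u_+^{\beta}\bigr)(\underline{u}-u_+)_+\,dx.
\end{equation*}
The left-hand side is nonnegative by the strict monotonicity of $-\Delta_p$, whereas the right-hand side is nonpositive because $s\mapsto s^{\beta}$ is decreasing on $(0,\infty)$; both sides must vanish, so $\nabla(\underline{u}-u_+)_+\equiv 0$ and hence $\underline{u}\leq u_+$ in $\Omega$. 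Applied to $-u_-$---which, by the second inequality in $(\mathrm{H}.3)$, is a positive supersolution of $-\Delta_p v=mv^{\beta}$---the same reasoning delivers $u_-\leq-\underline{u}$.

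The main obstacle I anticipate is the quantitative calibration of $C$: the drive $C^{p-1}d^{\alpha}$ from $-\Delta_p\overline{u}$ must dominate uniformly in $v\in[\underline{u},\overline{u}]$ both the bounded term $M$ and the genuinely singular term $Mv^{\alpha}$ in $(\mathrm{H}.2)$, and the analogous balance underlies the subsolution verification for $\underline{u}$ against $-\Delta_p u=mu^{\beta}$. The strict gap $\beta>-\min\{1,p-1\}$ in $(\mathrm{H}.3)$ is precisely what simultaneously secures $p-1>-\alpha\geq-\beta$ and the integrability $d^{\beta}\in L^{1}(\Omega)$ required to legitimize the test-function manipulations, so this hypothesis is what must be exploited at every step.
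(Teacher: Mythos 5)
Your proposal is correct, and its existence part coincides with the paper's argument: the same barriers $\overline{u}=Cy$ and $\underline{u}=C^{-1}y_{\delta}$ from \eqref{24}, the same calibration of $C$ via \eqref{21} and the inequalities $-\alpha<p-1$, $\beta-(p-1)<0$ coming from $(\mathrm{H.3})$, and an appeal to the singular sub-supersolution theorem (the paper cites \cite[Theorem 2]{KM}, which also delivers the $\mathcal{C}^{1,\tau}$ regularity directly); your verification of the supersolution inequality uniformly over $[\underline{u},\overline{u}]$ is a harmless strengthening of the paper's check at $\overline{u}$ alone, where the singular term $\overline{u}^{\alpha}\sim C^{\alpha}d^{\alpha}$ even decreases in $C$. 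Where you genuinely deviate is the proof of \eqref{p1}: the paper uses $u_{+}\le\overline{u}$ to get the pointwise chain $-\Delta_{p}u_{+}\ge m\overline{u}^{\beta}\ge m(Ccd)^{\beta}\ge C^{-(p-1)}d^{\alpha}\ge-\Delta_{p}\underline{u}$ (after a further enlargement of $C$, exploiting $p-1+\beta>0$) and then invokes the weak comparison principle, whereas you compare $u_{+}$ and $\underline{u}$ as super- and subsolution of the auxiliary equation $-\Delta_{p}u=mu^{\beta}$ and run the test-function argument with $(\underline{u}-u_{+})^{+}$, using that $s\mapsto s^{\beta}$ is decreasing. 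Both are sound; your variant needs only the positivity of $u_{+}$ (not the upper bound $u_{+}\le\overline{u}$) and no additional tuning of $C$ and $\delta$ beyond what the subsolution inequality $-\Delta_{p}\underline{u}\le m\underline{u}^{\beta}$ already required, at the price of checking by hand the integrability of the singular terms against the test function (which you do correctly via $\underline{u}^{\beta+1}\in L^{\infty}$ and $mu_{+}^{\beta}\varphi\le f(x,u_{+})\varphi\in L^{1}$), while the paper's route outsources the monotonicity argument to a quoted comparison principle. One small point to keep in mind: as in the paper, your comparison tacitly uses that a positive solution satisfies $u_{+}>0$ a.e.\ so that $u_{+}^{\beta}$ makes sense, and the $\mathcal{C}^{1,\tau}$ upgrade needs the two-sided bound $0<f(x,u_{+})\le M(1+\underline{u}^{\alpha})\lesssim 1+d^{\alpha}$, which your estimates do provide.
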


\begin{proof}
By (\ref{21}) one has 
\begin{eqnarray*}
&&C^{p-1}d(x)^{\alpha}\geq M\left( \left\Vert d\right\Vert_{\infty
}^{-\alpha }+(Cc^{-1})^{\alpha }\right) d(x)^{\alpha } \\
&&\geq M\left( d(x)^{-\alpha }+(Cc^{-1})^{\alpha }\right) d(x)^{\alpha } \\
&&\geq M(1+(Cc^{-1}d(x))^{\alpha })\geq M(1+(Cy(x))^{\alpha })\text{ \ for
all } x\in \overline{\Omega },
\end{eqnarray*}
provided $C>0$ is sufficiently large. Thus, from $(\mathrm{H.2})$, (\ref{20}%
) and (\ref{24}), it follows that 
\begin{eqnarray}  \label{5}
-\Delta_{p}\overline{u}\geq M(1+\overline{u}^{\alpha })\geq f(x,\overline{u}
)\text{ in }\Omega
\end{eqnarray}
and 
\begin{equation}  \label{6}
-\Delta _{p}(-\overline{u})\leq -M(1+\overline{u}^{\alpha })\leq f(x,-%
\overline{u})\text{ in }\Omega.
\end{equation}
From \eqref{20*}, \eqref{21}, \eqref{24}, and $(\mathrm{H.3})$, we get 
\begin{eqnarray}  \label{2}
&&-\Delta _{p}\underline{u}\leq C^{-(p-1)}d(x)^{\alpha }\leq C^{-\beta
}m(cd(x))^{\beta } \\
&&\leq C^{-\beta }my(x)^{\beta }=m\underline{u}(x)^{\beta}\leq f(x,%
\underline{u}(x)) \text{ in }\Omega  \notag
\end{eqnarray}
and 
\begin{equation}  \label{19*}
-\Delta_{p}(-\underline{u}) \geq -m\underline{u}(x)^{\beta}\geq f(x,-%
\underline{u}(x))\text{ in }\Omega
\end{equation}
provided $C>1$ is sufficiently large.

In view of \eqref{18}, \eqref{5}-\eqref{19*}, we deduce that $(\underline{u}%
, \overline{u})$ and $(-\overline{u},-\underline{u})$ form
sub-supersolutions pairs for $(\mathrm{P})$. Then, the sub-supersolution
method (see \cite[Theorem 2]{KM}) leads to the existence of a positive
solution $u_{+}\in \left[ \underline{u},\overline{u}\right] \cap \mathcal{C}%
^{1,\tau }(\overline{\Omega })$ and a negative solution $u_{-}\in \left[ -%
\overline{u},-\underline{u}\right] \cap \mathcal{C}^{1,\tau }(\overline{%
\Omega })$, with some $\tau \in (0,1)$.

Let $u_{+}\in \left[ 0,\overline{u}\right] \cap \mathcal{C}^{1}(\overline{%
\Omega })$ be a positive solution of $(\mathrm{P})$. By $(\mathrm{H.3})$, %
\eqref{21} and \eqref{24}, we get 
\begin{eqnarray*}
-\Delta _{p}u_{+} &=&f(x,u_{+})\geq mu_{+}^{\beta }\geq m\overline{u}^{\beta
}\geq m(Ccd(x))^{\beta } \\
&\geq &C^{-(p-1)}d(x)^{\alpha }\geq -\Delta _{p}\underline{u}(x)\text{ \ in }%
\Omega ,
\end{eqnarray*}
provided $C>1$ is sufficiently large and $\delta >0$ sufficiently small in %
\eqref{20*}. By the weak comparison principle, we infer that property (\ref%
{p1}) holds true. Using a similar argument, we show the corresponding
property for a negative solution.
\end{proof}

\subsection{Extremal solutions}

\begin{theorem}
\label{T2} Under assumptions $(\mathrm{H.2})$ and $(\mathrm{H.3})$, problem $%
(\mathrm{P})$ admits a smallest positive solution $u_{+}^{\ast }\in \mathcal{%
C}^{1}(\overline{\Omega })$ within $[\underline{u},\overline{u}]$ and a
biggest negative solution $u_{-}^{\ast }\in \mathcal{C}^{1}(\overline{\Omega 
})$ within $[-\overline{u},-\underline{u}]$. In addition, it holds 
\begin{equation}
\begin{array}{c}
\underline{u}\ll u_{+}^{\ast }\text{ \ and }-\underline{u}\gg u_{-}^{\ast
}\quad \text{in}\ \Omega .%
\end{array}
\label{11*}
\end{equation}
\end{theorem}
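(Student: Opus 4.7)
The plan is to obtain $u_+^*$ as a minimal element of the set
\[
\mathcal{S}_+ := \bigl\{u \in [\underline{u},\overline{u}] : u \text{ is a solution of }(\mathrm{P})\bigr\},
\]
which is nonempty by Theorem~\ref{T1}. Partially order $\mathcal{S}_+$ by the reversed pointwise a.e.\ order and apply Zorn's lemma; the construction of $u_-^*$ is entirely symmetric. The core of the argument is to verify that every totally ordered chain $\mathcal{C} \subset \mathcal{S}_+$ admits a lower bound inside $\mathcal{S}_+$.

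Given such a chain, I would first use separability of $L^p(\Omega)$ together with a diagonal procedure to extract a monotone decreasing sequence $\{u_n\}\subset\mathcal{C}$ with $u_n \searrow u_*$ pointwise a.e., where $u_*=\inf\mathcal{C}$. The inclusion $\underline{u}\le u_n\le\overline{u}$ and the lower bound $\underline{u}\ge c^{-1}C^{-1}d(x)$ furnished by \eqref{21} and \eqref{24} allow us to dominate $|f(\cdot,u_n)| \le M(1+\underline{u}^\alpha)$ by $(\mathrm{H}.2)$; since $\alpha>-1$, this dominant is integrable against any $W_0^{1,p}$ test function via a Hardy-type control. Testing the weak form with $u_n$ produces a uniform $W_0^{1,p}$ bound, so $u_n\rightharpoonup u_*$ along a subsequence. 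Testing next with $u_n-u_*$ and invoking dominated convergence gives
\[
\langle -\Delta_p u_n,\, u_n - u_*\rangle \to 0,
\]
and the $S_+$-property of $-\Delta_p$ upgrades this to $u_n\to u_*$ strongly in $W_0^{1,p}(\Omega)$. Passing to the limit in the weak equation yields $u_*\in\mathcal{S}_+$, the desired lower bound. Zorn's lemma then furnishes a minimal element $u_+^*\in\mathcal{S}_+$; regularity $u_+^*\in C^1(\overline{\Omega})$ follows from standard elliptic regularity as in Theorem~\ref{T1}. By the second part of Theorem~\ref{T1}, any positive $C^1$ solution in $[0,\overline{u}]$ already lies in $[\underline{u},\overline{u}]$, so $u_+^*$ is in fact smallest among all such positive solutions.

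For the strict inequality $\underline{u}\ll u_+^*$, I would exploit the strict differential inequality already built into the construction. From the proof of Theorem~\ref{T1},
\[
-\Delta_p \underline{u} \le C^{-(p-1)} d(x)^\alpha, \qquad -\Delta_p u_+^* = f(x,u_+^*) \ge m (u_+^*)^\beta,
\]
and by taking $C$ slightly larger and $\delta$ slightly smaller if needed one can arrange the right-hand side of the first to be strictly dominated by the right-hand side of the second on $\Omega$. Setting $w = u_+^* - \underline{u}\ge 0$, a strong comparison and boundary-point principle for the $p$-Laplacian (Vazquez type) then yields $w>0$ in $\Omega$ together with $\partial w/\partial\eta<0$ on $\partial\Omega$, i.e.\ $\underline{u}\ll u_+^*$. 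The companion inequality $-\underline{u}\gg u_-^*$ is obtained by the same argument applied on the negative side.

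The main obstacle I anticipate lies in the Zorn step: one must show that the infimum of a chain is still a solution despite the singular character of $f$. This hinges on two ingredients working in concert, namely the uniform lower barrier $\underline{u}\ge c^{-1}C^{-1}d(x)$ to keep $f(\cdot,u_n)$ dominated by an integrable function (making dominated convergence available near $\partial\Omega$), and the $S_+$-property to upgrade weak convergence to strong $W_0^{1,p}$ convergence needed to pass to the limit in $-\Delta_p u_n$. A secondary subtlety is coordinating the constants $C$ and $\delta$ in the construction of $\underline{u}$ so as to simultaneously validate the subsolution property used in Theorem~\ref{T1} and deliver the strict differential inequality required for the strong comparison principle that produces \eqref{11*}.
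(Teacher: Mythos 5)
Your chain argument (monotone sequence from the chain, uniform $W_0^{1,p}$ bound, domination of $f(\cdot,u_n)$ by $M(1+\underline{u}^{\alpha})\le M(1+(Cc)^{-\alpha}d(x)^{\alpha})\in L^{1}$, then the $S_+$-property to pass to the limit) is essentially the paper's argument and is fine. The genuine gap is in how you conclude that the Zorn minimal element is the \emph{smallest} solution. Zorn's lemma only produces a \emph{minimal} element of $\mathcal{S}_+$: a solution with no solution strictly below it. It does not give comparability with every other solution, and your appeal to the second part of Theorem \ref{T1} is a non sequitur --- that statement only says every positive solution lies above $\underline{u}$, not above $u_+^{\ast}$; a priori there could be two incomparable minimal solutions in $[\underline{u},\overline{u}]$. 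The paper closes this gap by first proving that $\mathcal{S}$ is \emph{downward directed}: for $u_1,u_2\in\mathcal{S}$, the function $\tilde u=\min\{u_1,u_2\}$ is again a supersolution (by the lattice property of sub-supersolutions, \cite[Theorem 3.20]{CLM}), so the sub-supersolution theorem of \cite{KM} yields a solution in $[\underline{u},\tilde u]$; then minimality plus directedness forces $u_+^{\ast}\le u$ for every $u\in\mathcal{S}$. Without some such directedness (or an equivalent device) your proof only delivers a minimal, not a smallest, positive solution, so the statement as claimed is not proved.

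A secondary weakness is the justification of $\underline{u}\ll u_+^{\ast}$. You correctly set up a strict differential inequality between $-\Delta_p\underline{u}$ and $-\Delta_p u_+^{\ast}$, but then invoke a ``Vazquez type'' strong maximum/boundary-point principle applied to $w=u_+^{\ast}-\underline{u}$. For $p\neq 2$ the difference of two $p$-harmonic-type functions does not satisfy an equation to which Vazquez's principle applies; one needs a genuine \emph{strong comparison principle} for the $p$-Laplacian, which is what the paper uses (\cite[Proposition 2.6]{AR}), and which requires the gap $\sigma=\sigma(\mathrm{K})>0$ between the right-hand sides on compact subsets $\mathrm{K}\subset\Omega$ --- exactly the uniform margin the paper arranges by taking $C$ large and $\delta$ small. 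Your outline can be repaired along these lines, but as written the comparison step rests on an inapplicable tool.
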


\begin{proof}
We only prove the existence of a smallest positive solution $u_{+}^{\ast
}\in \mathcal{C}^{1}(\overline{\Omega })$ within $[\underline{u},\overline{u}%
]$. The proof for the biggest negative solution in $[-\overline{u},-%
\underline{u}]$ can be carried out in a similar way.

Denote by $\mathcal{S}$ the set of all functions $w\in \lbrack \underline{u},%
\overline{u}]$ that are solutions of $(\mathrm{P})$. From Theorem \ref{T1}
it is known that $\mathcal{S}$ is not empty. We claim that $\mathcal{S}$ is
downward directed. To this end, let $u_{1},u_{2}\in \mathcal{S}$. Since $(%
\underline{u},u_{1})$ and $(\underline{u},u_{2})$ form pairs of
sub-supersolutions for $(\mathrm{P})$, if we set $\tilde{u}=\min
\{u_{1},u_{2}\}$, by virtue of \cite[Theorem 3.20]{CLM}, $(\underline{u},%
\tilde{u})$ is a pair of sub-supersolution, too. Then, owing to \cite[%
Theorem 2]{KM}, there exists a solution of $(\mathrm{P})$ in $[\underline{u},%
\tilde{u}]\cap \mathcal{C}^{1}(\overline{\Omega })$, which proves the claim.

Now we show that Zorn's lemma can be applied in $\mathcal{S}$. In this
respect, let us consider a chain $\mathcal{C}$ in $\mathcal{S}$ with respect
to the order $\geq $. There is a sequence $\{u_{k}\}_{k\geq 1}\subset 
\mathcal{C}$ such that $\inf C=\inf_{k\geq 1}u_{k}$ (see \cite[page 336]%
{DunSch}) and we can assume that the sequence $\{u_{k}\}_{k\geq 1}$ is
decreasing. If $\hat{u}:=\inf \mathcal{C}$, one has $u_{k}\rightarrow \hat{u}
$ a.e. in $\Omega $ and $\hat{u}\in \lbrack \underline{u},\overline{u}]$.
Having that each $u_{k}$ is a solution of $(\mathrm{P}),$ from $(\mathrm{H.2}%
)$, (\ref{21}) and (\ref{24}), we obtain 
\begin{equation*}
\left\Vert \nabla u_{k}\right\Vert _{p}^{p}=\int_{\Omega }f(x,u_{k})u_{k}\
dx\leq M\int_{\Omega }(u_{k}+u_{k}^{\alpha +1})\text{ }dx\leq
C_{0}(\left\Vert \overline{u}\right\Vert _{\infty }+\left\Vert \overline{u}%
\right\Vert _{\infty }^{1+\alpha }),
\end{equation*}
with a constant $C_{0}>0$. Therefore $\{u_{k}\}$ is bounded in $%
W_{0}^{1,p}(\Omega )$ and so up to a subsequence we have $%
u_{k}\rightharpoonup \hat{u}\text{ in }W_{0}^{1,p}(\Omega )$. Using that 
\begin{equation*}
\langle -\Delta _{p}u_{k},u_{k}-\hat{u}\rangle =\int_{\Omega
}f(x,u_{k})(u_{k}-\hat{u})\ dx,
\end{equation*}%
$(\mathrm{H.2})$, (\ref{21}), (\ref{24}), $u_{k}\in \mathcal{S}$, we find 
\begin{eqnarray*}
|f(x,u_{k})(u_{k}-\hat{u})| &\leq &M(1+u_{k}^{\alpha })|u_{k}-\hat{u}|\leq
M(1+(Cc)^{-\alpha }d(x)^{\alpha })|u_{k}-\hat{u}| \\
&\leq &2M(1+(Cc)^{-\alpha }d(x)^{\alpha })\left\Vert \overline{u}\right\Vert
_{\infty }\text{ \ a.e. in }\Omega.
\end{eqnarray*}
In view of \cite[Lemma, page 726]{LM}, we have $d(x)^{\alpha }\in
L^{1}(\Omega )$. Then Lebesgue's dominated convergence theorem implies that 
\begin{equation*}
\underset{k\rightarrow \infty }{\lim }\langle -\Delta _{p}u_{k},u_{k}-\hat{u}%
\rangle =0,
\end{equation*}
which allows us to invoke the $S_{+}$-property of $-\Delta _{p}$ on $%
W_{0}^{1,p}(\Omega )$ that gives $u_{k}\longrightarrow \hat{u}$ in $%
W_{0}^{1,p}(\Omega )$. We conclude that $\hat{u}=\inf \mathcal{C}\in\mathcal{%
S}$. Zorn's Lemma can be applied which provides a minimal element $%
u_{+}^{\ast }$ of $\mathcal{S}$. Since $\mathcal{S}\subset \lbrack 
\underline{u},\overline{u}]$, by $(\mathrm{H.2})$, (\ref{21}) and (\ref{24}%
), we infer from the regularity result in \cite[Lemma 3.1]{H} that $%
u_{+}^{\ast }\in \mathcal{C}^{1,\tau }(\overline{\Omega })$ for some $\tau
\in(0,1)$.

We claim that $u_{+}^{\ast }$ is the smallest solution of $(\mathrm{P})$ in $%
\mathcal{S}$. Indeed, if $u\in \mathcal{S}$, there is $\tilde{u}\in \mathcal{%
S}$ with $\tilde{u}\leq u_{+}^{\ast }$ and $\tilde{u}\leq u$ because $%
\mathcal{S}$ is downward directed. Recalling that $u_{+}^{\ast }$ is a
minimal element of $\mathcal{S}$, this results in $u_{+}^{\ast }=\tilde{u}%
\leq u$. This proves the claim.

What remains to show is the first inequality in (\ref{11*}). The second one
follows similarly. By $(\mathrm{H.3})$, \eqref{21} and \eqref{24}, for each
compact set $\mathrm{K}\subset \Omega ,$ there is a constant $\sigma =\sigma
(\mathrm{K})>0$ such that 
\begin{eqnarray*}
-\Delta _{p}u_{+}^{\ast } &=&f(x,u_{+}^{\ast })\geq m(u_{+}^{\ast })^{\beta
}\geq m\overline{u}^{\beta }\geq m(Ccd(x))^{\beta } \\
&>&C^{-(p-1)}d(x)^{\alpha }+\sigma \geq -\Delta _{p}\underline{u}(x)+\sigma 
\text{ \ in }\mathrm{K}
\end{eqnarray*}
provided $C>1$ is sufficiently large and $\delta >0$ sufficiently small in %
\eqref{20*}. By the strong comparison principle \cite[Proposition $2.6$]{AR}
, we infer that property (\ref{11*}) holds true. A quite similar argument
shows the corresponding property for a biggest negative solution $%
u_{-}^{\ast }$. This completes the proof.
\end{proof}

\section{Nodal solutions}

\label{S3}

Our main result is stated as follows.

\begin{theorem}
\label{T3} Under assumptions $(\mathrm{H.1})$, $(\mathrm{H.2})$ and $(%
\mathrm{H.3})$, problem $( \mathrm{P})$ possesses a solution $u_{\ast}\in
W_{0}^{1,p}\left( \Omega \right) $ such that $u_{\ast }\neq u_{+}$ and $%
u_{\ast}\neq u_{-}$. Moreover, $u_{\ast }$ is a nodal (sign-changing)
solution.
\end{theorem}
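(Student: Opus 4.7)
The plan is to implement the Leray--Schauder degree scheme sketched in the Introduction. I regularize $(\mathrm{P})$ by a family $(\mathrm{P}_\varepsilon)$, $\varepsilon>0$, in which $f(x,u)$ is replaced by a Carath\'eodory function $f_\varepsilon(x,u)$ that agrees with $f$ outside the strip $|u|\leq\varepsilon$ and is uniformly bounded on this strip (for example, $f_\varepsilon(x,u)=f(x,u+\varepsilon)$ for $u\geq 0$ and $f(x,u-\varepsilon)$ for $u<0$). The singularity is thus removed at the cost of the parameter $\varepsilon$, and solutions of $(\mathrm{P}_\varepsilon)$ coincide with the fixed points of the completely continuous operator
\[
T_\varepsilon(u)=(-\Delta_p)^{-1}\bigl(f_\varepsilon(\cdot,u)\bigr)\colon W_0^{1,p}(\Omega)\to W_0^{1,p}(\Omega).
\]
Assumption $(\mathrm{H}.2)$ then provides an a priori bound $R_\varepsilon$ for all fixed points of $T_\varepsilon$.

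Next I would perform three degree computations on $\mathcal{B}_{R_\varepsilon}(0)$. Step (a): show $\deg(I-T_\varepsilon,\mathcal{B}_{R_\varepsilon}(0),0)=0$ by homotoping $T_\varepsilon$ to $T_\varepsilon+\mu\phi_{1,p}$ for a sufficiently large $\mu>0$; a test against $\phi_{1,p}$ combined with $(\mathrm{H}.3)$ rules out fixed points of the final map, while $(\mathrm{H}.2)$ keeps the homotopy admissible. Step (b): define
\[
\mathcal{A}_{R_\varepsilon}=\bigl\{u\in\mathcal{B}_{R_\varepsilon}(0):u_-^\ast(x)<u(x)<u_+^\ast(x)\text{ a.e.\ in }\Omega\bigr\},
\]
and show $\deg(I-T_\varepsilon,\mathcal{B}_{R_\varepsilon}(0)\setminus\overline{\mathcal{A}_{R_\varepsilon}},0)=1$ by truncating $f_\varepsilon$ at $u_\pm^\ast$ outside $\mathcal{A}_{R_\varepsilon}$ and deforming to a map whose unique fixed point lies in the truncation region; the strict separation (\ref{11*}) from Theorem~\ref{T2} excludes fixed points on $\partial\mathcal{A}_{R_\varepsilon}$. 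Step (c): by additivity of the Leray--Schauder degree, $\deg(I-T_\varepsilon,\mathcal{A}_{R_\varepsilon},0)=-1\neq 0$, producing a nontrivial fixed point $u_\varepsilon^\ast\in\mathcal{A}_{R_\varepsilon}$, hence a solution of $(\mathrm{P}_\varepsilon)$ trapped strictly between $u_-^\ast$ and $u_+^\ast$.

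Passing to the limit $\varepsilon\to 0^+$, the a priori bound yields $u_\varepsilon^\ast\rightharpoonup u^\ast$ weakly in $W_0^{1,p}(\Omega)$ along a subsequence, with $u_-^\ast\leq u^\ast\leq u_+^\ast$. The lower barrier $|u_\varepsilon^\ast(x)|\geq c^{-1}d(x)$ coming from the trapping via (\ref{21}) together with $(\mathrm{H}.2)$ yields the uniform $L^1$-majorant $M(1+\mathrm{const}\cdot d(x)^\alpha)$ for $f_\varepsilon(\cdot,u_\varepsilon^\ast)$, whose integrability follows from \cite[Lemma, page 726]{LM}. Lebesgue's dominated convergence then justifies passing to the limit in the weak formulation, while the $S_+$-property of $-\Delta_p$ upgrades the weak convergence to strong, so $u^\ast\in[u_-^\ast,u_+^\ast]$ is a weak solution of $(\mathrm{P})$.

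The main obstacle is to ensure that $u^\ast$ actually changes sign. Remark~\ref{R1} together with $(\mathrm{H}.1)$ rules out $u^\ast\equiv 0$. If $u^\ast\geq 0$, the second part of Theorem~\ref{T1} places $u^\ast$ in $[\underline{u},\overline{u}]$, whereupon the minimality of $u_+^\ast$ in Theorem~\ref{T2} combined with $u^\ast\leq u_+^\ast$ would force $u^\ast=u_+^\ast$; to exclude this coincidence, I would exploit uniform $C^{1,\tau}$-estimates for $u_\varepsilon^\ast$ coming from \cite[Lemma 3.1]{H} together with the strict ordering (\ref{11*}), so that the strict inequality $u_\varepsilon^\ast<u_+^\ast$ survives in the $C^1$-limit on a set of positive measure. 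An analogous argument excludes $u^\ast\leq 0$. Consequently $u^\ast=u_\ast$ is the sought nodal solution, automatically distinct from the constant-sign solutions $u_+$ and $u_-$ of Theorem~\ref{T1}.
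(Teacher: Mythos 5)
Your overall architecture (regularize with a shift of size $\varepsilon$, degree $0$ on a large ball, degree $1$ off a trapping region, additivity, then pass to the limit with the $S_{+}$-property) is the same as the paper's, but two of your key steps do not work as described. First, the degree-zero computation. You propose to homotope $T_{\varepsilon}$ to $T_{\varepsilon}+\mu\phi_{1,p}$ and to ``rule out fixed points of the final map'' by testing against $\phi_{1,p}$ with $(\mathrm{H.3})$. Under $(\mathrm{H.2})$--$(\mathrm{H.3})$ with $\alpha<0$ the regularized nonlinearity $f(x,u+\gamma_{\varepsilon}(u))$ is bounded (in modulus by $M(1+\varepsilon^{\alpha})$), so the problem $-\Delta_{p}u=f(x,u+\gamma_{\varepsilon}(u))+\mu\phi_{1,p}$ is solvable for \emph{every} $\mu>0$ (bounded right-hand side, coercive energy); fixed points of the final map are not excluded, they are merely large, and then admissibility of your homotopy on $\partial\mathcal{B}_{R_{\varepsilon}}(0)$ is exactly what is left unproved. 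Moreover, ``testing against $\phi_{1,p}$'' so as to exploit the eigenvalue equation is a linear, self-adjoint device: for $p\neq 2$ one cannot move $-\Delta_{p}$ onto $\phi_{1,p}$, so $(\mathrm{H.3})$ (which anyway only gives $f\geq m s^{\beta}$ with $\beta<0$, a bound tending to $0$) produces no contradiction. The paper instead homotopes to the resonant Ambrosetti--Prodi right-hand side $\lambda_{1,p}(u^{+})^{p-1}+1$, whose nonsolvability is the essential known input (\cite[Proposition 9.64]{MMP}), and gets the uniform radius $R_{\varepsilon}$ by a blow-up/normalization argument (Proposition \ref{P2}); some unbounded comparison problem of this kind is indispensable, since for a bounded regularized nonlinearity adding a large multiple of $\phi_{1,p}$ cannot create nonexistence.

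Second, your trapping region and the sign-change argument at the limit. You take $\mathcal{A}_{R_{\varepsilon}}$ defined by $u_{-}^{\ast}<u<u_{+}^{\ast}$ a.e.; apart from the fact that this set is not open in $\mathcal{C}^{1}(\overline{\Omega})$ (so the degree on it and its boundary require care, and excluding fixed points of $(\mathrm{P}_{\varepsilon})$ on $\partial\mathcal{A}_{R_{\varepsilon}}$ is delicate because $u_{\pm}^{\ast}$ solve $(\mathrm{P})$, not $(\mathrm{P}_{\varepsilon})$), the decisive problem arises as $\varepsilon\to 0$: strict inequalities do not survive limits, and your claim that ``the strict inequality $u_{\varepsilon}^{\ast}<u_{+}^{\ast}$ survives in the $C^{1}$-limit on a set of positive measure'' has no justification, so a priori the limit could coincide with $u_{+}^{\ast}$. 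The paper's remedy is precisely to force the approximate solutions into the smaller, $\varepsilon$-independent interval $[-\underline{u},\underline{u}]$ (this is the role of the homotopy to $sgn(u)\,\phi_{1,p}/\Vert\phi_{1,p}\Vert_{\infty}$ and of Proposition \ref{P3}), so that the limit satisfies $-\underline{u}\leq u_{\ast}\leq\underline{u}$; then (\ref{p1}) and (\ref{11*}) show that a constant-sign $u_{\ast}$ would contradict the extremality of $u_{\pm}^{\ast}$ from Theorem \ref{T2}. Relatedly, your claimed lower barrier $|u_{\varepsilon}^{\ast}(x)|\geq c^{-1}d(x)$ is false: a sign-changing function trapped between $u_{-}^{\ast}$ and $u_{+}^{\ast}$ can vanish, so it does not furnish the majorant $M(1+\mathrm{const}\cdot d(x)^{\alpha})$ you invoke for dominated convergence; the uniform integrable bound has to be argued differently (as in the paper, from $(\mathrm{H.2})$ and the enclosure of the approximate solutions).
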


We will make use of the Leray-Schauder degree theory. For any $\varepsilon
\in (0,1)$, we introduce the function 
\begin{equation}
\gamma _{\varepsilon }(s):=\varepsilon \text{ }sgn(s),\quad \forall s\in 
\mathbb{R},  \label{12}
\end{equation}
and corresponding to it the regularized problem 
\begin{equation*}
(\mathrm{P}_{\varepsilon })\qquad \left\{ 
\begin{array}{ll}
-\Delta _{p}u=f(x,u+\gamma _{\varepsilon }(u)) & \text{in}\;\Omega , \\ 
u=0 & \text{on}\;\partial \Omega.%
\end{array}
\right.
\end{equation*}
Our goal is to prove that $(\mathrm{P}_{\varepsilon })$ admits a solution
within $[-\underline{u},\underline{u}]$ and then, passing to the limit as $%
\varepsilon \rightarrow 0$, to get the existence of the desired solution $%
u_{\ast }$ for problem $(\mathrm{P})$.

Theorem \ref{T1} provides two opposite constant-sign solutions $%
u_{\varepsilon ,+}$ and $u_{\varepsilon ,-}$ for problem $(\mathrm{P}%
_{\varepsilon})$ in $\mathcal{C}^{1,\tau }(\overline{\Omega})$ with some $%
\tau \in (0,1)$ satisfying (\ref{p1}). In particular, the set of solutions $%
u_{\varepsilon }\in \mathcal{C}^{1,\tau}(\overline{\Omega })$ to $(\mathrm{P}%
_{\varepsilon })$ is nonempty.

For any $R>0$, set 
\begin{equation*}
\mathcal{A}_{R}=\left\{ u\in \mathcal{B}_{R}(0)\,:-\underline{u}\leq u\leq 
\underline{u}\right\}
\end{equation*}%
and 
\begin{equation*}
\mathcal{B}_{R}(0)=\left\{ u\in \mathcal{C}^{1}(\overline{\Omega}
):\,\left\Vert u\right\Vert _{\mathcal{C}^{1}(\overline{\Omega })}<R\right\}
.
\end{equation*}

We are going to establish the solvability of problem $(\mathrm{P}%
_{\varepsilon})$.

\begin{theorem}
\label{T4} Under assumptions $(\mathrm{H.1})$, $(\mathrm{H.2})$ and $(%
\mathrm{H.3})$, the regularized problem $(\mathrm{P}_{\varepsilon })$
possesses a solution $u_{\varepsilon }\in ]u_{-}^{\ast },u_{+}^{\ast }[$ for
all $\varepsilon \in (0,1)$, where $u_{-}^{\ast }$ and $u_{+}^{\ast }$ are
the extremal constant-sign solutions found in Theorem \ref{T2}.
\end{theorem}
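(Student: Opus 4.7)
My plan is to apply the Leray--Schauder topological degree to the fixed-point operator $N_{\varepsilon}:C^{1}_{0}(\overline{\Omega})\to C^{1}_{0}(\overline{\Omega})$ defined by $N_{\varepsilon}(u)=(-\Delta_{p})^{-1}f(\cdot,u+\gamma_{\varepsilon}(u))$. Whenever $u(x)\neq 0$ we have $|u(x)+\gamma_{\varepsilon}(u(x))|\geq \varepsilon$, so by Remark~1 and $(\mathrm{H}.2)$ the composition $f(\cdot,u+\gamma_{\varepsilon}(u))$ belongs to $L^{\infty}(\Omega)$ for every $u\in C^{1}_{0}(\overline{\Omega})$, and classical nonlinear regularity makes $N_{\varepsilon}$ completely continuous on $C^{1}_{0}(\overline{\Omega})$. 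Solutions of $(\mathrm{P}_{\varepsilon})$ are exactly zeros of $I-N_{\varepsilon}$. A uniform $C^{1,\tau}$-bound on the solutions, obtained from $(\mathrm{H}.2)$ and Lieberman/Hai-type regularity as in \cite[Lemma 3.1]{H}, produces the radius $R_{\varepsilon}>0$ such that every fixed point of $N_{\varepsilon}$ lies in $\mathcal{B}_{R_{\varepsilon}}(0)$.

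I then compute two Leray--Schauder degrees. To show $\deg(I-N_{\varepsilon},\mathcal{B}_{R_{\varepsilon}}(0),0)=0$, I use the deformation
\begin{equation*}
H(t,u)=u-(-\Delta_{p})^{-1}\!\bigl[f(\cdot,u+\gamma_{\varepsilon}(u))+t\,\phi_{1,p}^{p-1}\bigr],\qquad t\in[0,T].
\end{equation*}
Testing $H(t,u)=0$ with $\phi_{1,p}$ and invoking a Picone-type inequality together with the one-sided bound in $(\mathrm{H}.2)$, I derive $\lambda_{1,p}\int u\,\phi_{1,p}^{p-1}\,dx\geq t\int\phi_{1,p}^{p}\,dx-C$, which for $T$ large enough rules out any solution inside $\mathcal{B}_{R_{\varepsilon}}(0)$; after enlarging $R_{\varepsilon}$ if necessary so that no zero of $H(t,\cdot)$ appears on $\partial\mathcal{B}_{R_{\varepsilon}}$ for any $t\in[0,T]$, homotopy invariance gives the value $0$.

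For the degree on $\mathcal{B}_{R_{\varepsilon}}(0)\setminus\mathcal{A}_{R_{\varepsilon}}$, I exploit the extremal constant-sign solutions $u_{\pm}^{\ast}$ provided by Theorem \ref{T2} (and their analogues for $(\mathrm{P}_{\varepsilon})$). The strict ordering in (\ref{11*}) guarantees that no solution of $(\mathrm{P}_{\varepsilon})$ sits on $\partial\mathcal{A}_{R_{\varepsilon}}$, so additivity of the degree may be applied. Using a truncation/penalization homotopy that leaves the sub-supersolution pairs $(\underline{u},\overline{u})$ and $(-\overline{u},-\underline{u})$ invariant and deforms $N_{\varepsilon}$ into a strictly increasing compact map on each order interval, I reduce the computation to Amann's classical result for order intervals and obtain the value $1$ on the complement.

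Once these two values are in hand, the additivity of the Leray--Schauder degree yields $\deg(I-N_{\varepsilon},\mathrm{int}\,\mathcal{A}_{R_{\varepsilon}},0)=0-1=-1\neq 0$, producing a zero $u_{\varepsilon}\in\mathcal{A}_{R_{\varepsilon}}$ of $I-N_{\varepsilon}$, hence a solution of $(\mathrm{P}_{\varepsilon})$ with $-\underline{u}\leq u_{\varepsilon}\leq \underline{u}$. Combining this with the strict inequalities $u_{-}^{\ast}\ll-\underline{u}$ and $\underline{u}\ll u_{+}^{\ast}$ from Theorem \ref{T2} gives $u_{\varepsilon}\in\,]u_{-}^{\ast},u_{+}^{\ast}[$, as required. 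The main obstacle I expect is the second degree computation: I must design the truncation homotopy so that the singularity of $f$ at the origin is absorbed by $\gamma_{\varepsilon}$ throughout the deformation, and so that no extraneous fixed point crosses the boundary $\partial\mathcal{A}_{R_{\varepsilon}}=\{u\equiv\pm\underline{u}\text{ somewhere}\}$. The strict-sub/supersolution property (\ref{11*}) combined with the weak comparison principle, together with the $S_{+}$-property of $-\Delta_{p}$ used as in Theorem \ref{T2}, should be enough to secure this.
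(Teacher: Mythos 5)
Your overall skeleton (degree $0$ on a large ball, degree $1$ on the complement of the order region $\mathcal{A}_{R_{\varepsilon}}$, additivity giving a nonzero degree on $\mathcal{A}_{R_{\varepsilon}}$, hence a solution in $[-\underline{u},\underline{u}]$ which the strict inequalities of Theorem \ref{T2} place in $]u_{-}^{\ast},u_{+}^{\ast}[$) is exactly the paper's, but both of your degree computations contain genuine gaps. For the value $0$ on $\mathcal{B}_{R_{\varepsilon}}(0)$ you deform by adding $t\phi_{1,p}^{p-1}$, $t\in[0,T]$. First, the inequality you claim from ``testing with $\phi_{1,p}$ and Picone'' is not available: for $p\neq 2$, testing the equation with $\phi_{1,p}$ yields $\int_{\Omega}|\nabla u|^{p-2}\nabla u\cdot\nabla\phi_{1,p}\,dx$, not $\lambda_{1,p}\int_{\Omega}u\,\phi_{1,p}^{p-1}\,dx$, and Picone-type inequalities need a sign condition on $u$ that solutions in the ball need not satisfy. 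Second, and more fundamentally, the scheme is circular: since $|f(x,u+\gamma_{\varepsilon}(u))|\leq M(1+\varepsilon^{\alpha})$ under $(\mathrm{H.2})$--$(\mathrm{H.3})$, the problem $-\Delta_{p}u=f(x,u+\gamma_{\varepsilon}(u))+t\phi_{1,p}^{p-1}$ creates no resonance and is solvable for every $t$, with solutions of size of order $t^{1/(p-1)}$; so excluding all solutions from a ball of radius $R$ at $t=T$ forces $T\gtrsim R^{p-1}$, while keeping $\partial\mathcal{B}_{R}(0)$ free of zeros for all $t\leq T$ forces $R^{p-1}\gtrsim T$, with constants you do not control. ``Enlarging $R_{\varepsilon}$ if necessary'' only restarts this loop. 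The paper avoids it by deforming the nonlinearity itself to $\lambda_{1,p}(u^{+})^{p-1}+1$, which is globally unsolvable (resonance at $\lambda_{1,p}$ plus a positive forcing), and by establishing uniform a priori bounds along the whole homotopy through the normalization/blow-up argument of Proposition \ref{P2}; that endpoint nonexistence is the idea your homotopy is missing.

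The second computation is also not secured. An Amann-type argument gives degree $1$ on (neighborhoods of) each of the two order intervals $[\underline{u},\overline{u}]$ and $[-\overline{u},-\underline{u}]$, but this does not determine the degree on all of $\mathcal{B}_{R_{\varepsilon}}(0)\setminus\overline{\mathcal{A}}_{R_{\varepsilon}}$: nothing you say controls possible zeros of your deformation in that region lying outside both intervals (for instance sign-changing functions with large positive and negative parts), so the value on the complement is simply not computed. Moreover, your claim that (\ref{11*}) guarantees no solution of $(\mathrm{P}_{\varepsilon})$ sits on $\partial\mathcal{A}_{R_{\varepsilon}}$ is unjustified: (\ref{11*}) concerns solutions of $(\mathrm{P})$, and Proposition \ref{P3} concerns constant-sign solutions of the auxiliary homotopy, not sign-changing ones; the paper instead disposes of this case by noting that a zero on $\partial\mathcal{A}_{R_{\varepsilon}}$ would already be the desired solution, and computes the complement degree to be $1$ by deforming there to $-\Delta_{p}u=sgn(u)\,\phi_{1,p}/\Vert\phi_{1,p}\Vert_{\infty}$, whose solution is unique. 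A further small point: your operator $N_{\varepsilon}$ is not defined at functions vanishing on a set of positive measure (there $u+\gamma_{\varepsilon}(u)=0$), and Remark \ref{R1} does not help, since it applies to solutions of $(\mathrm{P})$ under $(\mathrm{H.1})$, not to arbitrary $u\in C_{0}^{1}(\overline{\Omega})$.
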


\subsubsection{\textbf{The topological degree on a ball }$\mathcal{B}_{R_{%
\protect\varepsilon }}(0)$}

We introduce the following homotopy class of problems 
\begin{equation*}
(\mathrm{P}_{\varepsilon ,t})\qquad -\Delta _{p}u=\mathrm{F}{_{\varepsilon
,t}(x},u)\text{ in }\Omega ,\text{ \ }u=0\text{ \ on }\partial \Omega,
\end{equation*}
with 
\begin{equation*}
\mathrm{F}{_{\varepsilon ,t}(x},u)=tf(x,u+\gamma _{\varepsilon
}(u))+(1-t)(\lambda _{1,p}(u^{+})^{p-1}+1),
\end{equation*}
for $t\in \lbrack 0,1]$ and $\varepsilon \in (0,1)$, where $s^{+}:=\max
\{0,s\}.$ It is known that the problem 
\begin{equation*}
(\mathrm{P}_{\varepsilon ,0})\qquad \left\{ 
\begin{array}{l}
-\Delta _{p}u=\lambda _{1,p}(u^{+})^{p-1}+1\text{ in }\Omega \\ 
u=0\text{ \ on }\partial \Omega%
\end{array}
\right.
\end{equation*}
does not admit solutions $u\in W_{0}^{1,p}(\Omega)$ (see \cite[Proposition
9.64]{MMP}).

\begin{proposition}
\label{P2} Assume that $(\mathrm{H.2})$ and $(\mathrm{H.3})$ hold. Then each
solution $u_{\varepsilon }$ of $(\mathrm{P}_{\varepsilon ,t})$ belongs to $%
\mathcal{C}^{1}(\overline{\Omega })$ and satisfies 
\begin{equation}
\left\Vert u_{\varepsilon }\right\Vert _{\mathcal{C}^{1}(\overline{\Omega }%
)}<R_{\varepsilon },  \label{31}
\end{equation}%
for all $t\in (0,1]$ and $\varepsilon \in (0,1)$, provided the constant $%
R_{\varepsilon }>0$ is sufficiently large.
\end{proposition}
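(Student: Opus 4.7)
The plan is to argue by contradiction: reduce the $C^1$ estimate to an $L^\infty$ estimate via Lieberman-type regularity, and then produce the $L^\infty$ bound through a rescaling/blow-up argument that exploits the obstruction at $t=0$.

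First, I would observe that the shift $u\mapsto u+\gamma_\varepsilon(u)$ removes the singularity of $f$: whenever $u(x)\neq 0$ one has $|u(x)+\gamma_\varepsilon(u(x))|\geq\varepsilon$, so $(\mathrm{H.2})$ together with $\alpha<0$ gives the pointwise bound
\[
|f(x,u+\gamma_\varepsilon(u))|\leq M(1+\varepsilon^{\alpha})=:N_\varepsilon.
\]
A verbatim adaptation of Remark \ref{R1} forces every solution $u_\varepsilon$ of $(\mathrm{P}_{\varepsilon,t})$ to satisfy $u_\varepsilon\neq 0$ a.e.\ in $\Omega$ (else the reaction would fail the local integrability demanded by the weak formulation), so the above bound is effective almost everywhere. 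Consequently, once a uniform $L^\infty$ bound on $u_\varepsilon$ is available, $\mathrm{F}_{\varepsilon,t}(x,u_\varepsilon)$ is uniformly bounded in $L^\infty(\Omega)$ and Lieberman-type $C^{1,\tau}$-regularity for the $p$-Laplacian yields a uniform $C^{1,\tau}(\overline\Omega)$ estimate; $R_\varepsilon$ is then any number strictly larger than this $C^1$ bound.

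To secure the $L^\infty$ estimate, I would argue by contradiction: suppose sequences $t_n\in(0,1]$ and solutions $u_n$ of $(\mathrm{P}_{\varepsilon,t_n})$ exist with $M_n:=\|u_n\|_\infty\to\infty$. Setting $v_n:=u_n/M_n$, so $\|v_n\|_\infty=1$, and dividing the equation by $M_n^{p-1}$ yields
\[
-\Delta_p v_n=(1-t_n)\lambda_{1,p}(v_n^+)^{p-1}+M_n^{-(p-1)}\bigl[t_n f(x,u_n+\gamma_\varepsilon(u_n))+(1-t_n)\bigr],
\]
whose right-hand side is uniformly bounded in $L^\infty$. Lieberman compactness gives, along a subsequence, $v_n\to v$ in $C^1(\overline\Omega)$ with $\|v\|_\infty=1$, and $t_n\to t_\infty\in[0,1]$; the $S_+$-property of $-\Delta_p$ on $W_0^{1,p}(\Omega)$ upgrades this to strong $W^{1,p}$-convergence, so passing to the limit produces
\[
-\Delta_p v=(1-t_\infty)\lambda_{1,p}(v^+)^{p-1},\qquad v\in W_0^{1,p}(\Omega).
\]
Testing with $v^-$ forces $v\geq 0$, and the strong maximum principle together with $\|v\|_\infty=1$ gives $v>0$ in $\Omega$. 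Since $\lambda_{1,p}$ is the only eigenvalue of $-\Delta_p$ admitting a positive eigenfunction, one deduces $(1-t_\infty)\lambda_{1,p}=\lambda_{1,p}$, hence $t_\infty=0$ and $v=\phi_{1,p}/\|\phi_{1,p}\|_\infty$.

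The main obstacle will be closing the contradiction in this degenerate regime $t_n\to 0$, where $u_n$ is asymptotically a large positive multiple of $\phi_{1,p}$. Since $u_n>0$ in $\Omega$ for $n$ large by the $C^1$ convergence, Picone's inequality applied with $\phi_{1,p}$ and $u_n$, combined with the eigenvalue equation for $\phi_{1,p}$ (recall $\|\phi_{1,p}\|_p=1$) and the positivity of $f(x,u_n+\varepsilon)$ ensured by $(\mathrm{H.3})$, yields
\[
t_n\lambda_{1,p}\geq (1-t_n)\int_\Omega\frac{\phi_{1,p}^p}{u_n^{p-1}}\,dx.
\]
The plan is then to quantify the right-hand side asymptotically via dominated convergence applied to the rescaled profile $v_n\to\phi_{1,p}/\|\phi_{1,p}\|_\infty$ in $C^1$, with the Hopf bound \eqref{14} providing the needed majorant, and to combine it with a matching upper bound obtained by testing the equation with $u_n$ itself. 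The incompatibility of these asymptotics, traceable precisely to the nonvanishing constant term that obstructs solvability of $(\mathrm{P}_{\varepsilon,0})$, delivers the contradiction, and the assertion $\|u_\varepsilon\|_{\mathcal{C}^1(\overline\Omega)}<R_\varepsilon$ then follows from the regularity reduction recorded in the first paragraph.
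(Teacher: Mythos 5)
Your first two steps reproduce, in substance, the paper's own argument: the bound $|f(x,u+\gamma_\varepsilon(u))|\le M(1+\varepsilon^{\alpha})$ coming from the $\varepsilon$-shift, the rescaling of a supposed unbounded sequence of solutions, Lieberman compactness, and passage to the limit equation $-\Delta_p v=(1-t_\infty)\lambda_{1,p}(v^{+})^{p-1}$ with $v\ge 0$, $v\ne 0$ (the paper rescales by the $\mathcal{C}^1$-norm rather than the sup-norm, an inessential difference, and then, having restricted from the outset to $t\in[\delta,1]$ on the strength of the nonexistence of solutions of $(\mathrm{P}_{\varepsilon,0})$, concludes at once from the variational characterization \eqref{71}, since $(1-t)\lambda_{1,p}<\lambda_{1,p}$). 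Where you genuinely depart from the paper is the degenerate regime $t_n\to 0$, which your own analysis correctly identifies as the only remaining case ($v=\phi_{1,p}/\|\phi_{1,p}\|_\infty$, $u_n>0$ for large $n$) — but there your proof stops at a ``plan'', and that plan does not close.

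Carry out the asymptotics you propose and you will see the two bounds coincide instead of clashing. Picone plus $f>0$ gives, as you state, $t_n\lambda_{1,p}\ge(1-t_n)\int_\Omega\phi_{1,p}^p\,u_n^{-(p-1)}dx$, and since $v_n\to\phi_{1,p}/\|\phi_{1,p}\|_\infty$ in $\mathcal{C}^1(\overline\Omega)$, dominated convergence (with \eqref{14} as majorant) yields $\liminf_n t_n M_n^{p-1}\ge\lambda_{1,p}^{-1}\|\phi_{1,p}\|_\infty^{p-1}\int_\Omega\phi_{1,p}\,dx$. Testing the equation with $u_n$ and using \eqref{71} together with $f(x,u_n+\varepsilon)\le M(1+\varepsilon^{\alpha})$ gives $t_n\lambda_{1,p}\int_\Omega u_n^p\,dx\le\bigl(t_nM(1+\varepsilon^{\alpha})+1\bigr)\int_\Omega u_n\,dx$, hence $\limsup_n t_nM_n^{p-1}\le\lambda_{1,p}^{-1}\|\phi_{1,p}\|_\infty^{p-1}\int_\Omega\phi_{1,p}\,dx$ — exactly the same constant. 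The terms you discard cannot rescue this: $t_n\int_\Omega f\,\phi_{1,p}^p u_n^{-(p-1)}dx\ge0$ is of order $t_nM_n^{-(p-1)}$, i.e.\ lower order, and $\int_\Omega|\nabla u_n|^p dx-\lambda_{1,p}\int_\Omega u_n^p dx\ge0$ has the unhelpful sign. So no incompatibility is produced, only the consistent statement $t_nM_n^{p-1}\to\lambda_{1,p}^{-1}\|\phi_{1,p}\|_\infty^{p-1}\int_\Omega\phi_{1,p}\,dx$; and in fact this is the expected behaviour rather than an artefact: for $p=2$ and, say, $f(x,s)=|s|^{\alpha}\mathrm{sgn}(s)$ (which satisfies $(\mathrm{H.2})$–$(\mathrm{H.3})$), a Schauder fixed-point argument in the order interval $[\tfrac12 L_t^{-1}1,(M(1+\varepsilon^{\alpha})+1)L_t^{-1}1]$ with $L_t=-\Delta-(1-t)\lambda_{1,2}$ produces positive solutions of $(\mathrm{P}_{\varepsilon,t})$ of size of order $t^{-1}$ as $t\to0^{+}$, so the contradiction you aim for cannot be extracted from these estimates at all. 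In short, the decisive step of your proof is missing and not repairable along the indicated route; to match the statement you must either treat $t$ bounded away from $0$ as the paper does (restricting to $t\in[\delta,1]$ and invoking the nonexistence at $t=0$), or supply a genuinely new input that excludes blow-up along $t_n\to0^{+}$.
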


\begin{proof}
The nonlinear regularity theory implies that $u_{\varepsilon}\in \mathcal{C}%
^{1}(\overline{\Omega})$. Due to the fact that $(\mathrm{P}_{\varepsilon
,0}) $ has no solution, it suffices to prove that the assertion holds for
every $t\in [\delta,1]$ with any $\delta\in (0,1)$. Suppose by contradiction
that for every $n$ there exist $t_{n}\in [\delta,1]$ and a solution $%
u_{\varepsilon ,n}$ of $(\mathrm{P}_{\varepsilon ,t_{n}})$ such that 
\begin{equation*}
t_{n}\rightarrow t\in [\delta,1]\text{ \ and \ } \theta_{n}:=\Vert
u_{\varepsilon ,n}\Vert_{\mathcal{C}^{1}(\overline{\Omega })}\rightarrow
\infty \text{ \ as }n\rightarrow \infty.
\end{equation*}

Setting 
\begin{equation*}
v_{\varepsilon ,n}:=\frac{u_{\varepsilon ,n}}{\theta_{n}}\in \mathcal{C}^{1}(%
\overline{\Omega }),
\end{equation*}
problem $(\mathrm{P}_{\varepsilon ,t_{n}})$ results in 
\begin{equation}  \label{13}
-\Delta _{p}v_{\varepsilon ,n}=\frac{t_{n}}{\theta _{n}^{p-1}}%
f(x,u_{\varepsilon ,n}+\gamma _{\varepsilon }(u_{\varepsilon ,n}))+\frac{%
1-t_{n}}{\theta _{n}^{p-1}}(\lambda _{1,p}(u_{\varepsilon ,n}^{+})^{p-1}+1).
\end{equation}
By $(\mathrm{H.2})$ and \eqref{12}, we have the estimate 
\begin{eqnarray*}
&&\left\vert \frac{t_{n}}{\theta _{n}^{p-1}}f(x,u_{\varepsilon ,n}+\gamma
_{\varepsilon }(u_{\varepsilon ,n}))+\frac{1-t_{n}}{\theta _{n}^{p-1}}%
(\lambda _{1,p}(u_{\varepsilon ,n}^{+})^{p-1}+1)\right\vert \\
&&\leq M(1+|u_{\varepsilon ,n}+\gamma _{\varepsilon }(u_{\varepsilon
,n})|^{\alpha })+\lambda _{1,p}({\normalsize v}_{\varepsilon ,n}^{+})^{p-1}+1
\\
&&\leq M(1+\varepsilon ^{\alpha })+\lambda _{1,p}({\normalsize v}%
_{\varepsilon ,n}^{+})^{p-1}+1 \\
&&\leq C(1+\Vert {\normalsize v}_{\varepsilon ,n}\Vert _{C^{1}(\overline{%
\Omega })}^{p-1})
\end{eqnarray*}
in $\Omega$, with some constant $C>0$ independent of $n$. Thanks to the
regularity up to the boundary (refer to \cite{L}), we derive from \eqref{13}
that ${\normalsize v}_{\varepsilon ,n}$ is bounded in $\mathcal{C}^{1,\tau }(%
\overline{\Omega })$ with certain $\tau \in (0,1)$. The compactness of the
embedding $\mathcal{C}^{1,\tau }(\overline{\Omega })\subset \mathcal{C}^{1}(%
\overline{\Omega })$ implies along a subsequence that 
\begin{equation}  \label{17}
{\normalsize v}_{\varepsilon ,n}\rightarrow {\normalsize v}_{\varepsilon}%
\text{ \ in }\mathcal{C}^{1}(\overline{\Omega })\text{.}
\end{equation}
Letting $n\rightarrow \infty$ in \eqref{13}, on the basis of \eqref{17} we
arrive at 
\begin{equation*}
\left\{ 
\begin{array}{ll}
-\Delta _{p}{\normalsize v}_{\varepsilon }=(1-t)\lambda _{1,p}({\normalsize v%
}_{\varepsilon }^{+})^{p-1} & \text{ in }\Omega , \\ 
{\normalsize v}_{\varepsilon }=0 & \text{on }\partial \Omega.%
\end{array}
\right.
\end{equation*}
This ensures that ${\normalsize v}_{\varepsilon }={\normalsize v}%
_{\varepsilon }^{+}$, which is nonzero because $\Vert {\normalsize v}%
_{\varepsilon }\Vert _{\mathcal{C}^{1}(\overline{\Omega })}=1$. Acting with $%
{\normalsize v}_{\varepsilon }$ produces 
\begin{equation*}
\int_{\Omega }|\nabla {\normalsize v}_{\varepsilon }|^{p}\,\mathrm{d}%
x=(1-t)\lambda _{1,p}\int_{\Omega }{\normalsize v}_{\varepsilon }^{p}\,%
\mathrm{d}x.
\end{equation*}
We cannot have $t=1$ because ${\normalsize v}_{\varepsilon }\neq 0$. If $t<1$%
, by (\ref{71}) we reach the contradiction 
\begin{equation*}
\lambda _{1,p}\int_{\Omega }{\normalsize v}_{\varepsilon }^{p}\,\mathrm{d}%
x\leq (1-t)\lambda _{1,p}\int_{\Omega }{\normalsize v}_{\varepsilon }^{p}\,%
\mathrm{d}x,
\end{equation*}
which completes the proof.
\end{proof}

For every $\varepsilon \in (0,1)$, let us define the homotopy $\mathrm{H}%
_{\varepsilon }:[0,1]\times \overline{\mathcal{B}}_{R_{\varepsilon
}}(0)\rightarrow \mathcal{C}^{1}(\overline{\Omega })$ by 
\begin{equation*}
\mathrm{H}_{\varepsilon }(t,u)=u-(-\Delta _{p})^{-1}\mathrm{F}{\
_{\varepsilon ,t}(x},u)
\end{equation*}%
that is admissible for the Leray-Schauder topological degree by Proposition %
\ref{P2} and because the operator $(-\Delta _{p})^{-1}$ is compact.

The nonexistence of solutions to problem $(\mathrm{P}_{\varepsilon ,0})$
yields 
\begin{equation*}
\deg \left( \mathrm{H}_{\varepsilon }(0,\cdot ),\mathcal{B}_{R_{\varepsilon
}}(0),0\right) =0.
\end{equation*}
Consequently, the homotopy invariance property implies 
\begin{equation}  \label{35}
\begin{array}{c}
\deg \left( \mathrm{H}_{\varepsilon }(1,\cdot),\mathcal{B}_{R_{\varepsilon
}}(0),0\right)=0\text{ for all }\varepsilon \in (0,1).%
\end{array}%
\end{equation}

\subsubsection{\textbf{The degree on }$\mathcal{B}_{R_{\protect\varepsilon%
}}(0) \backslash \overline{\mathcal{A}}_{R_{\protect\varepsilon }}$\textbf{.}%
}

Consider the problem 
\begin{equation*}
(\mathrm{\tilde{P}}_{\varepsilon ,t})\qquad -\Delta _{p}u=\mathrm{\tilde{F}}{%
_{\varepsilon ,t}}({x,}u)\text{ in }\Omega ,\text{ \ }u=0\text{ \ on }
\partial \Omega ,
\end{equation*}
for all $t\in \lbrack 0,1]$ and $\varepsilon \in (0,1)$, where 
\begin{equation*}
\mathrm{\tilde{F}}{_{\varepsilon ,t}}({x,}u)=tf(x,u+\gamma _{\varepsilon
}(u))+(1-t)\text{ }sgn(u)\text{ }\frac{\phi _{1,p}}{\left\Vert \phi
_{1,p}\right\Vert _{\infty }}.
\end{equation*}

\begin{proposition}
\label{P3} Assume that $(\mathrm{H.2})$ and $(\mathrm{H.3})$ are fulfilled.
Then every solution $u_{\varepsilon }$ of $(\mathrm{\tilde{P}}_{\varepsilon
,t})$ belongs to $\mathcal{C}^{1}(\overline{\Omega })$ and satisfies %
\eqref{31} for all $t\in \lbrack 0,1]$ and $\varepsilon \in (0,1)$, provided
the constant $R_{\varepsilon }>0$ is sufficiently large. In addition, every
positive (resp. negative) solution $u_{\varepsilon ,+}$ (resp. $%
u_{\varepsilon ,-}$) of $(\mathrm{\tilde{P}}_{\varepsilon ,t})$ verifies 
\begin{equation}
\begin{array}{c}
\underline{u}\ll u_{\varepsilon ,+}\text{ \ (resp. }-\underline{u}\gg
u_{\varepsilon ,-}\text{)}\quad \text{in}\ \Omega.%
\end{array}
\label{11}
\end{equation}
\end{proposition}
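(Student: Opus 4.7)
The plan is to imitate the proofs of Proposition~\ref{P2} (for \eqref{31}) and of Theorem~\ref{T2} (for \eqref{11}). The key structural observation is that the shift by $\gamma_{\varepsilon}$ \emph{desingularizes} the reaction: since $|u+\gamma_{\varepsilon}(u)|\geq\varepsilon$ whenever $u\neq 0$, hypothesis $(\mathrm{H}.2)$ gives the pointwise bound $|f(x,u+\gamma_{\varepsilon}(u))|\leq M(1+\varepsilon^{\alpha})$, while $|sgn(u)\phi_{1,p}/\|\phi_{1,p}\|_{\infty}|\leq 1$. Consequently $\mathrm{\tilde{F}}_{\varepsilon,t}(x,u_{\varepsilon})$ lies in $L^{\infty}(\Omega)$ with norm depending only on $\varepsilon$, and Lieberman's regularity \cite{L} gives $u_{\varepsilon}\in\mathcal{C}^{1,\tau}(\overline{\Omega})$ for some $\tau\in(0,1)$.

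For the a priori bound \eqref{31}, I would argue by contradiction as in Proposition~\ref{P2}: suppose $t_{n}\in[0,1]$ and $u_{\varepsilon,n}$ solve $(\mathrm{\tilde{P}}_{\varepsilon,t_{n}})$ with $\theta_{n}:=\|u_{\varepsilon,n}\|_{\mathcal{C}^{1}(\overline{\Omega})}\to\infty$, and set $v_{\varepsilon,n}=u_{\varepsilon,n}/\theta_{n}$. The essential improvement over Proposition~\ref{P2} is that \emph{both} contributions in $\mathrm{\tilde{F}}_{\varepsilon,t_{n}}$ are uniformly $L^{\infty}$-bounded, hence after dividing by $\theta_{n}^{p-1}$ the whole right-hand side of the normalized equation tends to $0$ uniformly in $\Omega$, independently of $t_{n}$. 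Boundary regularity \cite{L} combined with the compact embedding $\mathcal{C}^{1,\tau}\subset\mathcal{C}^{1}$ yields, along a subsequence, $v_{\varepsilon,n}\to v_{\varepsilon}$ in $\mathcal{C}^{1}(\overline{\Omega})$ with $\|v_{\varepsilon}\|_{\mathcal{C}^{1}(\overline{\Omega})}=1$; passing to the limit produces $-\Delta_{p}v_{\varepsilon}=0$ in $\Omega$, $v_{\varepsilon}=0$ on $\partial\Omega$, hence $v_{\varepsilon}\equiv 0$, a contradiction.

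For the strict lower bound \eqref{11}, I would mimic the closing step of Theorem~\ref{T2}. Given a positive solution $u_{\varepsilon,+}$ of $(\mathrm{\tilde{P}}_{\varepsilon,t})$, the estimate \eqref{31} gives $\|u_{\varepsilon,+}\|_{\infty}<R_{\varepsilon}$, and $(\mathrm{H}.3)$ together with $\beta<0$ yields $f(x,u_{\varepsilon,+}+\varepsilon)\geq m(u_{\varepsilon,+}+\varepsilon)^{\beta}\geq m(R_{\varepsilon}+\varepsilon)^{\beta}>0$. Combined with \eqref{14}, on any compact $\mathrm{K}\subset\Omega$ there exists $A=A(\mathrm{K},\varepsilon)>0$ independent of $t$ such that
\[
-\Delta_{p}u_{\varepsilon,+}=t\,f(x,u_{\varepsilon,+}+\varepsilon)+(1-t)\frac{\phi_{1,p}}{\|\phi_{1,p}\|_{\infty}}\geq A\quad\text{in }\mathrm{K},
\]
since a convex combination of two positive quantities is bounded below by their minimum. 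From \eqref{20*}--\eqref{24}, $-\Delta_{p}\underline{u}$ is bounded above in $\mathrm{K}$ by $C^{-(p-1)}\delta^{\alpha}$; choosing $C>1$ large and $\delta>0$ small as in Theorem~\ref{T2} produces $\sigma=\sigma(\mathrm{K})>0$ with $-\Delta_{p}u_{\varepsilon,+}\geq -\Delta_{p}\underline{u}+\sigma$ in $\mathrm{K}$, and the strong comparison principle \cite[Proposition 2.6]{AR} delivers $\underline{u}\ll u_{\varepsilon,+}$. The negative case is symmetric.

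The main subtlety is not the estimate \eqref{31}, which is actually easier than Proposition~\ref{P2} because the new perturbation $sgn(u)\phi_{1,p}/\|\phi_{1,p}\|_{\infty}$ is $L^{\infty}$-bounded (so the P2 trick of restricting to $t\in[\delta,1]$ via nonexistence at $t=0$ is unnecessary), but rather the uniform-in-$t$ character of \eqref{11}: at $t=1$ only the $tm(u_{\varepsilon,+}+\varepsilon)^{\beta}$ term is present to dominate $-\Delta_{p}\underline{u}$, while at $t=0$ only the $\phi_{1,p}$-term is available; the convex-combination step is what keeps both positive lower bounds effective throughout $t\in[0,1]$.
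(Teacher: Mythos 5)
Your proposal is correct and follows essentially the paper's own proof: the uniform bound $|\mathrm{\tilde{F}}_{\varepsilon,t}(x,u)|\leq M(1+\varepsilon^{\alpha})+1$ plus regularity for \eqref{31}, and for \eqref{11} the same splitting into $\Omega\setminus\overline{\Omega}_{\delta}$ and $\Omega_{\delta}$, the lower bound $tm(R_{\varepsilon}+1)^{\beta}+(1-t)c_{0}\delta/\Vert\phi_{1,p}\Vert_{\infty}$ via \eqref{14}, comparison with $-\Delta_{p}\underline{u}$ on compact sets, and the strong comparison principle of \cite{AR}. The only divergence is in \eqref{31}: the paper gets the $\mathcal{C}^{1}$ bound directly (Moser iteration for $L^{\infty}$, then \cite{L}), whereas you run a normalization/blow-up contradiction as in Proposition~\ref{P2}; your version also works, but it is a more roundabout route resting on the same uniform $L^{\infty}$ estimate, and as you yourself observe the nonexistence-at-$t=0$ device of Proposition~\ref{P2} is not needed here.
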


\begin{proof}
By assumption $(\mathrm{H.2})$, one has 
\begin{equation*}
\begin{array}{l}
|\mathrm{\tilde{F}}{_{\varepsilon ,t}}({x,}u)|\leq M(1+\varepsilon ^{\alpha
})+1.%
\end{array}%
\end{equation*}
The Moser iteration can be implemented ensuring that $u_{\varepsilon }\in
L^{\infty }(\Omega)$. Next, the regularity theory up to the boundary (see 
\cite{L}) entails the bound in \eqref{31}.

We are going to show the first inequality in (\ref{11}). The second one
follows similarly. Let $u_{\varepsilon ,+}$ be a positive solution of $(%
\mathrm{\tilde{P}}_{\varepsilon ,t})$. Given $\delta >0$, on account of $(%
\mathrm{H.3})$, (\ref{14}), (\ref{12}) and \eqref{31}, we have 
\begin{eqnarray*}
&&C^{-(p-1)}d(x)^{\alpha }\leq C^{-(p-1)}\delta ^{\alpha } \\
&<&tm(R_{\varepsilon }+1)^{\beta }+(1-t)\frac{c_{0}\delta }{\left\Vert \phi
_{1,p}\right\Vert _{\infty }} \\
&\leq &tm(u_{\varepsilon ,+}+\gamma _{\varepsilon }(u_{\varepsilon
,+}))^{\beta }+(1-t)\frac{\phi _{1,p}}{\left\Vert \phi _{1,p}\right\Vert
_{\infty }} \\
&\leq &\mathrm{\tilde{F}}{_{\varepsilon ,t}}({x,}u_{\varepsilon ,+})\text{ \
in\ }\Omega \backslash \overline{\Omega }_{\delta }
\end{eqnarray*}
and 
\begin{equation*}
-C^{-(p-1)}<0\leq \mathrm{\tilde{F}}{_{\varepsilon ,t}}({x,}u_{\varepsilon
,+})\text{ in }\Omega _{\delta },
\end{equation*}
for all $t\in \lbrack 0,1]$ and $\varepsilon \in (0,1)$, provided the
constant $C>0$ is large enough. Thus, for each compact set $\mathrm{K}%
\subset \subset \Omega $, there is a constant $\sigma =\tau (\mathrm{K})>0$
such that 
\begin{equation*}
\sigma +\left\{ 
\begin{array}{ll}
d(x)^{\alpha } & \text{in \ }\Omega \setminus \overline{\Omega }_{\delta },
\\ 
-1 & \text{in \ }\Omega _{\delta },%
\end{array}
\right. <\mathrm{\tilde{F}}{_{\varepsilon ,t}}({x,}u_{\varepsilon ,+})\text{
in }\ \mathrm{K}.
\end{equation*}
Hence, via (\ref{20*}), (\ref{24}) and the strong comparison principle (see 
\cite[Proposition $2.6$]{AR}), we infer that $\underline{u}\ll
u_{\varepsilon ,+}$ in $\overline{\Omega }$. Then Theorem \ref{T2} ends the
proof.
\end{proof}

Let us define the homotopy $\mathrm{\tilde{H}}_{\varepsilon }:[0,1]\times 
\overline{\mathcal{B}_{R_{\varepsilon }}(0)\backslash \mathcal{A}%
_{R_{\varepsilon }}}\rightarrow \mathcal{C}^{1}(\overline{\Omega })$ by 
\begin{equation*}
\mathrm{\tilde{H}}_{\varepsilon }(t,u)=u-(-\Delta _{p})^{-1}\mathrm{\tilde{F}%
}{_{\varepsilon ,t}(x},u)
\end{equation*}
for all $t\in \lbrack 0,1]$ and $\varepsilon \in (0,1)$. The homotopy $%
\mathrm{\tilde{H}}_{\varepsilon }$ is admissible for the Leray-Schauder
topological degree by Proposition \ref{P3} and because the operator $%
(-\Delta _{p})^{-1}$ is compact.

The Minty-Browder Theorem (see, e.g., \cite[Theorem 5.16]{B}) and the
regularity result in \cite{L} guarantee that the problem 
\begin{equation*}
-\Delta _{p}u=\frac{\phi _{1,p}}{\left\Vert \phi _{1,p}\right\Vert _{\infty }%
}\text{ in }\Omega,\text{ }u=0\text{ on }\partial \Omega ,
\end{equation*}
admits a unique solution $u\in \mathcal{C}^{1}(\overline{\Omega })$. Then
the homotopy invariance property of the Leray-Schauder degree shows 
\begin{equation*}
\begin{array}{ll}
\deg (\mathrm{\tilde{H}}_{\varepsilon }(1,\cdot ),\mathcal{B}%
_{R_{\varepsilon }}(0)\backslash \overline{\mathcal{A}}_{R_{\varepsilon }},0)
& =\deg (\mathrm{\tilde{H}}_{\varepsilon }(0,\cdot ),\mathcal{B}%
_{R_{\varepsilon }}(0)\backslash \overline{\mathcal{A}}_{R_{\varepsilon }},0)
\\ 
& =1.%
\end{array}%
\end{equation*}
Since 
\begin{equation*}
\mathrm{H}_{\varepsilon }(1,\cdot )=\mathrm{\tilde{H}}_{\varepsilon
}(1,\cdot )\quad \text{in}\,\,\,\mathcal{B}_{R_{\varepsilon }}(0)\backslash 
\mathcal{A}_{R_{\varepsilon }},
\end{equation*}
we deduce that 
\begin{equation}
\begin{array}{c}
\deg (\mathrm{H}_{\varepsilon }(1,\cdot),\mathcal{B}_{R_{\varepsilon
}}(0)\backslash \overline{\mathcal{A}}_{R_{\varepsilon }},0)=1.%
\end{array}
\label{56}
\end{equation}

\subsubsection{\textbf{Proof of Theorem \protect\ref{T4}.}}

We may assume that 
\begin{equation*}
\mathrm{H}_{\varepsilon }(1,u)\not=0\quad \text{for all }u\in \partial 
\mathcal{A}_{R_{\varepsilon }}\text{ and }\varepsilon \in (0,1).
\end{equation*}%
Otherwise, $u\in \partial \mathcal{A}_{R_{\varepsilon }}$ would be a
solution of $(\mathrm{P}_{\varepsilon })$ within $\left[ -\underline{u},%
\underline{u}\right] $ and thus Theorem \ref{T4} is proven.

By virtue of the domain additivity property of Leray-Schauder degree it
follows that 
\begin{eqnarray*}
&&\deg (\mathrm{H}_{\varepsilon }(1,\cdot ),\mathcal{B}_{R_{\varepsilon
}}(0)\backslash \overline{\mathcal{A}}_{R_{\varepsilon }},0)+\deg (\mathrm{H}%
_{\varepsilon }(1,\cdot), \mathcal{A}_{R_{\varepsilon }},0) \\
&=&\deg (\mathrm{H}_{\varepsilon }(1,\cdot ),\mathcal{B}_{R_{\varepsilon
}}(0),0).
\end{eqnarray*}
Hence, by (\ref{35}) and (\ref{56}), we deduce 
\begin{equation*}
\deg (\mathrm{H}_{\varepsilon }(1,\cdot ),\mathcal{A}_{R_{\varepsilon
}},0)=-1,
\end{equation*}
obtaining that problem $(\mathrm{P}_{\varepsilon })$ has a solution $%
u_{\varepsilon }\in \mathcal{A}_{R_{\varepsilon }},$ for all $\varepsilon
\in (0,1)$. Furthermore, the nonlinear regularity theory up to the boundary
(see \cite{L}) guarantees that $u_{\varepsilon }\in \mathcal{C}^{1,\tau}(%
\overline{\Omega })$ with some $\tau \in (0,1)$.

\subsection{\textbf{Proof of Theorem \protect\ref{T3}.}}

Set $\varepsilon =\frac{1}{n}$ in $(\mathrm{P}_{\varepsilon })$ with any
integer $n\geq 1$. Theorem \ref{T4} provides a solution $u_{n}:=u_{\frac{1}{n%
}}$ of problem $(\mathrm{P}_{\frac{1}{n}})$ such that 
\begin{equation}
u_{n}\in \lbrack -\underline{u},\underline{u}],\quad \forall n.  \label{26}
\end{equation}
There it holds 
\begin{equation}
\int_{\Omega }|\nabla u_{n}|^{p-2}\nabla u_{n}\text{\thinspace }\nabla
\varphi \text{ }\mathrm{d}x=\int_{\Omega }f(x,u_{n}+\gamma _{\frac{1}{n}%
})\varphi \ \mathrm{d}x  \label{1}
\end{equation}
for all $\varphi \in W_{0}^{1,p}(\Omega )$. Acting with $\varphi =u_{n}$, we
find from $(\mathrm{H.2})$ and (\ref{26}) the estimate 
\begin{eqnarray*}
&&\int_{\Omega }|\nabla u_{n}|^{p}\text{ }\mathrm{d}x=\int_{\Omega
}f(x,u_{n}+\gamma _{\frac{1}{n}}(u_{n}))u_{n}\ \mathrm{d}x \\
&\leq &M\int_{\Omega }(1+|u_{n}+\gamma _{\frac{1}{n}}(u_{n})|^{\alpha
})|u_{n}|\ \mathrm{d}x \\
&\leq &M\int_{\Omega }(|u_{n}|+|u_{n}|^{\alpha +1})\ \mathrm{d}x \\
&\leq &M|\Omega |(\left\Vert \overline{u}\right\Vert _{\infty }+\left\Vert 
\overline{u}\right\Vert _{\infty }^{\alpha +1})<\infty .
\end{eqnarray*}
Hence, along a subsequence it holds $u_{n}\rightharpoonup u_{\ast }$ in $%
W_{0}^{1,p}(\Omega )$ with some $u_{\ast }\in W_{0}^{1,p}(\Omega )$. By %
\eqref{26}, we note that 
\begin{equation}
-\underline{u}\leq u_{\ast }\leq \underline{u}\ \text{ in }\Omega.
\label{112}
\end{equation}
Inserting $\varphi =u_{n}-u_{\ast }$ yields 
\begin{equation*}
\int_{\Omega }|\nabla u_{n}|^{p-2}\nabla u_{n}\text{\thinspace }\nabla
(u_{n}-u_{\ast })\text{ }\mathrm{d}x=\int_{\Omega }f(x,u_{n}+\gamma _{\frac{1%
}{n}}(u_{n}))(u_{n}-u_{\ast })\ \mathrm{d}x.
\end{equation*}%
Then, from $(\mathrm{H.2})$ and \eqref{26}, we see that 
\begin{equation*}
\underset{n\rightarrow \infty }{\lim }\int_{\Omega }|\nabla
u_{n}|^{p-2}\nabla u_{n}\text{\thinspace }\nabla (u_{n}-u_{\ast })\text{ }%
\mathrm{d}x=0.
\end{equation*}
At this point, the $S_{+}$-property of $-\Delta _{p}$ on $W_{0}^{1,p}(\Omega
)$ (see, e.g., \cite[Proposition 3.5]{MMP}) implies 
\begin{equation*}
u_{n}\rightarrow u_{\ast }\text{ in }W_{0}^{1,p}(\Omega ).
\end{equation*}
By virtue of $(\mathrm{H.2})$, Lebesgue's dominated convergence theorem
entails 
\begin{equation*}
\lim_{n\rightarrow \infty }\int_{\Omega }f(x,u_{n}+\gamma _{\frac{1}{n}%
}(u_{n}))\varphi \ \mathrm{d}x=\int_{\Omega }f(x,u_{\ast })\varphi \ \mathrm{%
d}x,
\end{equation*}%
for all $\varphi \in W_{0}^{1,p}(\Omega )$. Consequently, we may pass to the
limit in (\ref{1}) finding that $u_{\ast }$ is a solution of problem $(%
\mathrm{P})$. Remark \ref{R1} renders that the solution $u_{\ast }$ is
nontrivial. Moreover, due to \eqref{11}, \eqref{112} and Theorems \ref{T1}
and \ref{T2}, we infer that the solution $u_{\ast }$ of problem $(\mathrm{P}%
) $ cannot be of constant sign (otherwise the opposite constant-sign
solutions $u_{-}^{\ast}$ and $u_{+}^{\ast }$ were not extremal as known from
Theorem \ref{T2}). Therefore the solution $u_{\ast}$ is nodal. This
completes the proof. \newline


\begin{thebibliography}{99}
\bibitem{AR} D. Arcoya \& D. Ruiz, \emph{The Ambrosetti-Prodi problem for
the p-Laplace operator}, Comm. Partial Diff. Eqts. 31 (2006), 849-865.

\bibitem{B} H. Br\'{e}zis, \emph{Functional Analysis, Sobolev Spaces and
Partial Differential Equations}, Springer, New York, 2010.

\bibitem{CLM} S. Carl, V. K. Le \& D. Motreanu, Nonsmooth Variational
Problems and Their Inequalities. Comparison Principles and Applications,
Springer, New York, 2007.

\bibitem{DunSch} N. Dunford \& J. T. Schwartz, \emph{Linear Operators. I.
General Theory}, Interscience Publishers, Inc., New York, (1958).

\bibitem{DM} H. Dellouche \& A. Moussaoui, \emph{Singular quasilinear
elliptic systems with gradient dependence}, Positivity 26 (2022), 10.
doi:10.1007/s11117-022-00868-3.

\bibitem{DM2} H. Didi \& A. Moussaoui, \emph{Multiple positive solutions for
a class of quasilinear singular elliptic systems}, Rend. Circ. Mat. Palermo,
II. Ser. 69 (2020), 977-994.

\bibitem{H} D. D. Hai, \emph{On a class of singular p-Laplacian boundary
value problems}, J. Math. Anal. Appl. 383 (2011), 619-626.

\bibitem{KM} B. Khodja \& A. Moussaoui, \emph{Positive solutions for
infinite semipositone/positone quasilinear elliptic systems with singular
and superlinear terms}, Diff. Eqts. App. 8(4) (2016), 535-546.

\bibitem{L} G. M. Lieberman, \emph{Boundary regularity for solutions of
degenerate elliptic equations}, Nonlinear Anal. 12 (1988), 1203-1219.

\bibitem{LM} A. C. Lazer \& P. J. Mckenna, \emph{On a singular nonlinear
elliptic boundary-value problem}, Proc. American Math. Soc. 3 (111), 1991.

\bibitem{MMM} S.A. Marano, G. Marino, A. Moussaoui, \emph{Singular
quasilinear elliptic systems in}\textit{\ }$%
%TCIMACRO{\U{211d} }%
%BeginExpansion
\mathbb{R}
%EndExpansion
^{N}$, Ann. Mat. Pura Appl. 198(4) (2019), 1581-1594.

\bibitem{MMP} D. Motreanu, V.V. Motreanu \& N. Papageorgiou, \emph{%
Topological and Variational methods with applications to Nonlinear Boundary
Value Problems}, Springer, New York, 2014.

\bibitem{MM1} D. Motreanu, A. Moussaoui, \emph{An existence result for a
class of quasilinear singular competitive elliptic systems}, Appl. Math.
Lett.\emph{\ }38 (2014), 33-37.

\bibitem{MM2} D. Motreanu, A. Moussaoui, $\emph{A}$ $\emph{quasilinear}$ $%
\emph{singular}$ $\emph{elliptic}$ $\emph{system}$ $\emph{without}$ $\emph{%
cooperative}$ $\emph{structure}$, Acta Math. Sci.\emph{\ }34 (B) (2014),
905-916.

\bibitem{MM3} D. Motreanu, A. Moussaoui, \emph{Existence and boundedness of
solutions for a singular cooperative quasilinear elliptic system}, Complex
Var. Elliptic Equ. 59 (2014), 285-296.

\bibitem{M} A. Moussaoui, \emph{Nodal solutions for singular semilinear
elliptic systems}, FILOMAT 15 (37) (2023), 5107-5117.
\end{thebibliography}
\end{document}